\definecolor{gnred}{RGB}{255,91,89}
\definecolor{gnred1}{RGB}{71,0,0} 
\definecolor{gnred2}{RGB}{117,0,0} 
\definecolor{gnred3}{RGB}{164,0,0} 
\definecolor{gnred4}{RGB}{211,0,0} 
\definecolor{gnred5}{RGB}{255,0,0} 
\definecolor{gnred6}{RGB}{255,42,34} 
\definecolor{gnred7}{RGB}{255,91,89} 
\definecolor{gnblue1}{RGB}{0,36,71}   
\definecolor{gnblue2}{RGB}{0,60,118}  
\definecolor{gnblue3}{RGB}{0,85,164}
\definecolor{gnblue4}{RGB}{0,108,212}
\definecolor{gnblue4}{RGB}{0,108,212}
\definecolor{gnblue5}{RGB}{0,133,255}  
\definecolor{gnblue6}{RGB}{35,156,255} 
\definecolor{gnblue7}{RGB}{88,177,255} 
\definecolor{gnbrown1}{RGB}{71,27,0}  
\definecolor{gnbrown2}{RGB}{117,45,0} 
\definecolor{gnbrown3}{RGB}{164,62,0} 
\definecolor{gnbrown4}{RGB}{211,80,0} 
\definecolor{gnbrown5}{RGB}{255,97,0} 
\definecolor{gnbrown6}{RGB}{255,127,26} 
\definecolor{gnbrown7}{RGB}{255,155,86} 
\newcommand\Item[1][]{%
  \ifx\relax#1\relax  \item \else \item[#1] \fi
  \abovedisplayskip=0pt\abovedisplayshortskip=0pt~\vspace*{-\baselineskip}}
\newcommand{\e}{\operatorname{e}}
\newtheoremstyle{ieeeconf}
 {0pt}   
 {0pt}   
 {\normalfont}  
 {\parindent}       
 {\itshape} 
 {:}         
 { } 
 {\thmname{#1} \thmnumber{#2}\thmnote{ (#3)}} 
\renewenvironment{proof}[1][\proofname]{\par
  \pushQED{\qed}%
  \normalfont \topsep\z@
  \trivlist
  \item[\hskip2em
        \itshape
 #1\@addpunct{:}]\ignorespaces
}{%
  \popQED\endtrivlist\@endpefalse
}
\theoremstyle{ieeeconf}
\newtheorem{defn}{Definition}
\newtheorem{thm}{Theorem}
\newtheorem{lemma}[thm]{Lemma}
\newtheorem{cor}[thm]{Corollary}
\newtheorem{remark}[thm]{Remark}
\newcommand{\R}{\mathbb{R}}
\newcommand{\real}{\mathbb{R}}
\newcommand{\norm}[2]{\|#1\|_{#2}}
\newcommand{\tr}{\operatorname{tr}}
\newcommand{\setdef}[2]{\{#1 \, | \, #2\}}
\newcommand{\map}[3]{#1 \colon #2 \rightarrow #3}
\newcommand{\argmin}[1]{\underset{#1}{\textup{arg\,min}}}
\newcommand{\realextended}{\overline{\real}}
\newcommand{\mcD}{\mathcal{D}}
\newcommand{\mcX}{\mathcal{X}}
\newcommand{\xstar}{x^{\star}}
\newcommand{\softt}[2]{\operatorname{soft}_{#1}({#2})}
\newcommand{\softtbig}[2]{\operatorname{soft}_{#1}\bigl({#2}\bigr)}
\newcommand{\Softt}[2]{\operatorname{S}_{#1}({#2})}
\newcommand{\Softtbig}[2]{\operatorname{S}_{#1}\bigl({#2}\bigr)}
\newcommand{\sign}[1]{\operatorname{sign}({#1})}
\newcommand{\proj}[1]{\mathrm{proj}_{#1}}
\newcommand{\prox}[1]{\mathrm{prox}_{#1}}
\newcommand{\Fprox}{\operatorname{F_{\prox{}}}}
\newcommand{\Fgd}{\operatorname{F_{gd}}}
\DeclareMathOperator{\dom}{Dom}
\newcommand{\change}[1]{\textcolor{black}{#1}}
\newcommand{\sashachange}[1]{\textcolor{black}{#1}}
\title{Proximal Gradient Dynamics: \\ Monotonicity, Exponential Convergence, and Applications}
\author{Anand Gokhale, Alexander Davydov, Francesco Bullo \thanks{This work
    was in part supported by AFOSR project FA9550-21-1-0203 and NSF
    Graduate Research Fellowship under Grant 2139319.  The authors are with
    the Center for Control, Dynamical Systems, and Computation, UC Santa
    Barbara, Santa Barbara, CA 93106 USA.  {\tt\small
      \{anand\_gokhale,davydov,bullo\}@ucsb.edu}.}  }
\begin{document}

\maketitle

\thispagestyle{empty}
\pagestyle{empty}

\begin{abstract}
  In this letter we study the proximal gradient dynamics. This
  recently-proposed continuous-time dynamics solves optimization problems
  whose cost functions are separable into a nonsmooth convex and a smooth
  component.
  First, we show that the cost function decreases monotonically along the
  trajectories of the proximal gradient dynamics.
  We then introduce a new condition that guarantees exponential convergence
  of the cost function to its optimal value, and show that this condition
  implies the proximal Polyak-\L{ojasiewicz} condition. 
  We also show that the proximal Polyak-\L{ojasiewicz} condition guarantees exponential convergence of the cost function.
  Moreover, we extend these results to time-varying optimization problems,
  providing bounds for equilibrium tracking.
  Finally, we discuss applications of these findings, including the LASSO
  problem, certain matrix based problems \change{and a numerical experiment on a feed-forward neural network.}
\end{abstract}

\section{Introduction}
Optimization problems are central to various domains of engineering and
data science. Many of these problems can be framed as minimizing a smooth
cost function combined with a nonsmooth but convex regularizer that promotes
structure in the solution. For example, the addition of the $\ell_1$ norm as a
regularizer induces sparsity, a key principle in compressed sensing and
sparse coding~\cite{AB-MT:08}. In other cases, convex constraints can be
incorporated using indicator functions. A common approach for solving such
problems, where the cost function is separable into a smooth, possibly
nonconvex, and a nonsmooth convex component, is the proximal gradient
algorithm~\cite{NP-SB:14}. This (discrete-time) algorithm gained prominence
in sparse encoding applications~\cite{AB-MT:08}. A list of widely used
regularizers and their associated proximal operators is available
in~\cite[Chapter 10]{AB:17}.

Analyzing the continuous-time versions of discrete-time optimization
algorithms has garnered much interest since the seminal work
in~\cite{KJA-LH-HU:58}.  Continuous-time dynamics have proven useful across
various engineering disciplines, including notable contributions from
Hopfield and Tank in dynamical neuroscience~\cite{JJH-DWT:85}, Kennedy and
Chua in analog circuit design~\cite{MPK-LOC:88}, and Brockett in systems
and control theory~\cite{RWB:91}.
\change{There are several compelling reasons to investigate continuous-time
version of the proximal gradient algorithm. 
First, these dynamics establish a connection between machine learning and 
dynamical neuroscience. The inclusion of a nonsmooth convex component allows 
for the incorporation of regularizers into neuroscientific loss landscapes, 
similar to those used in machine learning. Second, proximal algorithms form the cornerstone of modern optimization algorithms.
In Model Predictive Control (MPC), solution methods like Sequential Quadratic Programming (SQP) and Alternating Direction Method of Multipliers (ADMM) rely on proximal gradient updates.
Insights from continuous-time algorithms also inform discrete-time approaches.}


\textit{Related Work: } The analysis of algorithms through the lens of systems theory is of much recent interest~\cite{FD-HZ-GB-SB-JL-MM:24}. Proximal gradient algorithms have been extensively
studied in the discrete-time optimization setting~\cite{NP-SB:14}.  A key
result from this analysis is the identification of the
Polyak-\L{ojasiewicz} (PL) condition, initially introduced for gradient
descent in~\cite{BTP:63} and later extended to proximal methods
in~\cite{HK-JN-MS:16}.  When the cost function satisfies this condition in
discrete time, proximal gradient algorithms achive linear convergence. The
authors of~\cite{HK-JN-MS:16} also discuss several existing conditions in
literature and their relationships with the PL condition. Notably, they
propose proximal versions of the PL and Kurdyka \L{ojasiewicz} (KL)
conditions and show their equivalence.

The study of optimization problems as continuous-time dynamical systems has
a long history~\cite{KJA-LH-HU:58}. 
  \change{More recently, they have been used to solve primal dual dynamics~\cite{GQ-NL:19}, nonlinear programming problems~\cite{AA-JC:23} and saddle point problems~\cite{AC-BG-JC:17}}.
  The continuous-time proximal gradient dynamics were
first proposed in~\cite{SHM-MRJ:21} where it was shown that the PL
condition implies the exponential convergence of the forward-backward
envelope, even in the absence of strong convexity.

\textit{Contributions:} In this work, we analyze the convergence properties
and applications of proximal gradient dynamics for optimization problems
with cost functions that combine a nonsmooth convex term and a potentially
nonconvex smooth term \change{in continuous time. Our analysis is based on studying the convergence of the cost function, as in many applications, the cost is typically measurable and interpretable.} Our first contribution in Theorem~\ref{thm:decreasing-energy-smooth} demonstrates that the cost
function decreases monotonically along the trajectories of the proximal
gradient dynamics, even in a general nonconvex setting. \change{This finding draws a parallel with the behavior of gradient descent dynamics in continuous time. We also some general properties of subgradients of convex functions, which facilitate the proof of Theorem~\ref{thm:decreasing-energy-smooth}.}
We then identify a natural condition that ensures exponential convergence in continuous time; we show that our condition implies the proximal KL condition. Next, we examine the PL condition for proximal gradient dynamics in Theorem~\ref{thm:PL_cts_time} and
prove that it guarantees exponential convergence of the cost function, \change{which is an exact analog to the result in discrete time. Prior work~\cite{SHM-MRJ:21} showed that, under the PL condition, the forward backward envelope of the cost function exponentially decays, but this function is not necessarily interpretable in all cases.} Additionally, we derive Lipschitz-based bounds for tracking the
optimal function value in parameter-varying problems under the PL
condition in Theorem~\ref{thm:PL_time_varying}. Finally, we explore applications of these results in sparse
reconstruction, neural networks, and the analysis of some matrix based problems.

The letter is structured as follows:
Section~\ref{sec:prob_and_prelims} covers preliminaries and formulates the
optimization problem. Section~\ref{sec:global_convergence} presents our
main result on the convergence of the proximal dynamics. We discuss exponential stability under the proximal PL
condition in Section~\ref{sec:PL_condn}, followed by
applications of our results in Section~\ref{sec:apps}.

\section{{Problem Formulation and Preliminaries}}\label{sec:prob_and_prelims}
\emph{Notation:} We define $\realextended := [-\infty,\infty]$. We denote a norm on $\R^n$ as $\norm{\cdot}{}$. For $p \in [1,\infty]$, we denote the $p-$norm on $\R^n$ as $\norm{\cdot}{p}$. The inner product is denoted by $\langle\cdot,\cdot\rangle$. We deonte the matrix Frobenius norm by $\norm{\cdot}{F}$, and the trace norm as $\norm{X}{*}$. We denote the signum function as $\sign{\cdot}$. We denote the gradient of a differentiable function $f$ at $x$ by $\nabla f(x)$, and the set of subdifferentials of a function $g$ at $x$ by $\partial g(x)$. For $g:\R\to\realextended$, we denote $\dom g := \setdef{x \in \real^n}{g(x) \neq \pm \infty}$. We denote the projection operation onto $C$ by $\proj{C}(\cdot)$. 

Next, we recall some definitions.
\begin{defn}[Dini Derivative]
 For $\map{\varphi}{(a,b)}{\real}$, we define the upper right Dini derivative at $t \in (a,b)$ as
\begin{align}
 D^+ \varphi(t) := \limsup_{h \to 0^+} \frac{\varphi(t+h) - \varphi(t)}{h}.
\end{align}
\end{defn}
\begin{defn}[L-smoothness]
 A differentiable function ${f:\R^n \to \R}$ is said to be $L-$smooth in a norm $\norm{\cdot}{}$, if
\begin{align}
 \norm{\nabla f(x) - \nabla f(x)}{} \leq L\norm{x-y}{}.
\end{align}
\end{defn}

In this work, we consider the optimization problem,
\begin{align}\label{eq:opt_problem}
 \min_{x} f(x) + g(x) = F(x),
\end{align}
where $f:\R^n \rightarrow \R$ is continuously differentiable and $g: \R^n \rightarrow \realextended$ is a convex closed proper (CCP) function which is potentially nondifferentiable. We begin by introducing the proximal operator and the proximal gradient dynamics.

\subsection{Proximal operator and the proximal gradient dynamics}
The proximal operator of a closed, convex, and proper function $\map{g}{\real^n}{\realextended}$ is defined as
\begin{align}
 \prox{\alpha g}(x) = \argmin{u \in \real^n}\bigl(\alpha g(u) + \frac{1}{2}\norm{u - x}{2}^2\bigr).
\end{align}

The proximal gradient dynamics proposed in~\cite{SHM-MRJ:21}, are given by
\begin{align}\label{eq:prox_dynamics}
 \dot x = -x + \prox{\alpha g}\bigl(x - \alpha \nabla f(x) \bigr) = F_{\prox{}}(x).
\end{align}
The authors of \cite{SHM-MRJ:21} prove that under assumptions on the strong convexity and smoothness of $f$, these dynamics are globally exponentially converge to the solution of optimization problem~\eqref{eq:opt_problem}, as a consequence of the nonexpansive nature of the proximal operator. 

\subsection{Polyak-\L{ojaciewicz} and related conditions}
The PL condition~\cite{BTP:63} was initially proposed to show linear convergence in discrete time setups for gradient descent.
\begin{defn}[PL condition]\label{def:PL_condition}
 A differentiable function $\map{f}{\real^n}{\real}$ which has a minimum at $\xstar \in \R^n$ is said to satisfy the PL condition if there exists a $\mu > 0$ such that
  \begin{align}\label{eq:PL_condition}
 \frac{1}{2} \norm{\nabla f(x)}{}^2 \geq \mu \bigl(f(x) - f(\xstar) \bigr).
  \end{align}
\end{defn}
The PL condition is weaker than the usual assumption of convexity. A discussion of the different conditions for convergence of gradient descent in discrete time is discussed in~\cite{HK-JN-MS:16}. Further~\cite{HK-JN-MS:16} also proposes an extension of the PL condition for the proximal case, under the assumption of $L-$smoothness, and showed its equivalence to the KL condition. 
\begin{defn}[Proximal PL Condition~\cite{HK-JN-MS:16}] $\map{f}{\real^n}{\real}$ and $\map{g}{\real^n}{\realextended}$ are said to satisfy the proximal PL condition if there exists a ${\mu > 0}$ and ${\alpha > 0}$, for all $x$, such that
\begin{align}\label{eq:Prox_PL_condition}
 \frac{1}{2} \mcD_g(x,\alpha) \geq \mu(F(x) - F(\xstar)),
\end{align}
where $F(\xstar)$ is the minimum value of $F$, and
\begin{align*}
   &\mcD_g(x,\alpha) =  \\
   & \frac{2}{\alpha} \max_{y\in \real^n} \left\{\langle\nabla f(x), x-y \rangle - \frac{1}{2\alpha}\norm{x -y}{2}^2 + g(x) - g(y)\right\}.
\end{align*}
\end{defn}
\begin{remark}
 Note that the original definition in~\cite{HK-JN-MS:16} enforces $\alpha =1/L$, where $L$ is the smoothness parameter of $f$. We relax this constraint in order to achieve a more general bound.  
\end{remark}

\begin{defn}[Proximal KL condition~\cite{HK-JN-MS:16}]
The KL condition with exponent $\frac{1}{2}$ holds if there exists $\hat \mu > 0$ such that for all $x$,
\begin{align}\label{eq:KL_condition}
 \min_{s \in \partial F(x)} \norm{s}{2}^2\geq 2\hat \mu(F(x) - F(\xstar)),
\end{align}
where $F(x) = f(x) + g(x)$ and the $\partial F(x)$ denotes the set $\setdef{\nabla f(x) + v}{v \in \partial g(x)}$.
\end{defn}

\section{Global convergence of the proximal gradient dynamics}\label{sec:global_convergence}

We begin with our first main result which shows that under the proximal gradient dynamics, the cost function decreases along trajectories of the system. First, we consider the case where $g$ is finite-valued. 

\begin{thm}[Nonincreasing cost function under proximal gradient dynamics] \label{thm:decreasing-energy-smooth}
 For the optimization problem~\eqref{eq:opt_problem}, let the following assumptions hold true.
    \begin{enumerate}[label=\textup{(A\arabic*)},leftmargin=*]
      \item $f$ is continuously differentiable.
      \item $g$ is convex, closed, and proper (CCP), and takes finite values for all finite $x$.
      \item A global minimizer $\xstar$ exists, and $f(\xstar) + g(\xstar) = F^*$.
  \end{enumerate}
 Then, for the proximal gradient dynamics~\eqref{eq:prox_dynamics}:
  \begin{enumerate}
    \item \label{fact:prox-descent-i} the function $V(x) = f(x) + g(x) -
      F^*$ is nonincreasing along solutions and $D^+ V(x) \leq
      -\frac{1}{\alpha} \norm{\Fprox(x)}{2}^2$;
      
    \item \label{fact:prox-descent-ii} each trajectory converges to the set
      of stationary points
      \begin{align*}
        \Omega = \setdef{x}{0 \in \nabla f(x) + \partial g(x)};
      \end{align*}
    \item if additionally, there exists $\mu > 0$ such that for every $x$,
      \begin{align}\label{eq:better_than_PL}
        \frac{1}{2}\norm{\Fprox(x)}{2}^2 \geq \mu\alpha^2 (F(x) - F^*),
      \end{align}
      then each trajectory $x(\cdot)$ satisfies
      \begin{align*}
        F(x(t)) - F^* \leq \e^{-2\mu\alpha t} (F(x(0)) - F^*),
      \end{align*}
      in other words, $F(x(\cdot))$ globally exponentially converges to $F^*$
      with rate $2\mu\alpha > 0$.
    \end{enumerate}
\end{thm}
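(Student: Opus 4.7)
I would prove the three parts in order, using part (i) as the foundation for (ii) and (iii). The heart of the argument is a Dini-derivative bound on $V$ along trajectories; once that is in hand, (ii) is a LaSalle-type observation and (iii) is a one-line Gronwall estimate.

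\textbf{Part (i).} Write $y := \prox{\alpha g}(x - \alpha \nabla f(x))$, so that the dynamics read $\dot x = y - x = \Fprox(x)$. The first-order optimality condition for the prox gives the subgradient inclusion
\begin{align*}
 \frac{x - y}{\alpha} - \nabla f(x) \in \partial g(y).
\end{align*}
For the smooth part I would use the ordinary chain rule: $\frac{d}{dt} f(x(t)) = \langle \nabla f(x), y - x \rangle$. For the nonsmooth part I would invoke the fact that a finite-valued CCP function is continuous and the convex directional derivative agrees with the Dini derivative along $x(\cdot)$, so $D^+ g(x(t)) = \max_{s \in \partial g(x)} \langle s, \dot x \rangle$. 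The key estimate is monotonicity of $\partial g$: for any $s \in \partial g(x)$ and any $t \in \partial g(y)$, $\langle s - t, x - y \rangle \geq 0$, which I would rewrite as $\langle s, y - x \rangle \leq \langle t, y - x \rangle$. Plugging in the specific $t = (x-y)/\alpha - \nabla f(x)$ from the prox inclusion yields
\begin{align*}
  D^+ g(x(t)) \;\leq\; -\tfrac{1}{\alpha}\|x - y\|_2^2 + \langle \nabla f(x), x - y \rangle.
\end{align*}
Adding this to the smooth contribution cancels the $\nabla f$ terms and delivers $D^+ V \leq -\tfrac{1}{\alpha}\|\Fprox(x)\|_2^2$.

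\textbf{Part (ii).} From (i), $V$ is nonincreasing and bounded below by $0$, hence convergent; moreover $\int_0^\infty \|\Fprox(x(t))\|_2^2\,dt \leq \alpha V(x(0))$ is finite. A standard LaSalle-type argument (using continuity of $\Fprox$) forces the omega-limit set to lie in the zero set of $\Fprox$. Finally, the identity $\Fprox(x)=0 \iff x = \prox{\alpha g}(x - \alpha \nabla f(x))$ combined with the prox optimality condition gives $-\nabla f(x) \in \partial g(x)$, i.e.\ $x \in \Omega$.

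\textbf{Part (iii).} Under the hypothesis $\tfrac{1}{2}\|\Fprox(x)\|_2^2 \geq \mu\alpha^2 V(x)$, the Dini bound from (i) chains into $D^+ V \leq -2\mu\alpha\, V$. A standard comparison lemma (Gronwall for Dini derivatives) then yields $V(x(t)) \leq e^{-2\mu\alpha t} V(x(0))$, which is exactly the claimed exponential decay.

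\textbf{Main obstacle.} The only nontrivial step is part (i), specifically justifying the Dini computation of $g$ along the trajectory and selecting the right subgradient to bound it. The trick is to bound $D^+ g(x(t))$ using a subgradient at $y$ rather than at $x$, which is precisely what convex-subgradient monotonicity permits, and what makes the $\nabla f$ cross-term cancel. The rest of the theorem follows from standard continuous-time arguments once the descent inequality is established.
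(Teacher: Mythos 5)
Your proposal is correct and follows the same skeleton as the paper's proof (Dini-derivative descent bound via the prox optimality condition, LaSalle for part (ii), Gr\"onwall for part (iii)), but it justifies the one delicate step differently. The paper bounds $\limsup_{h\to 0^+}\bigl(g(\phi_h(x))-g(x)\bigr)/h$ using minimum-norm subgradients at the perturbed points $\phi_h(x)$ and then passes to the limit via outer semicontinuity of $\partial g$ (its Lemma~\ref{lem:ccp_subdifferential_continuity}); afterwards it applies two separate convexity inequalities, one at $x$ and one at $z$, before inserting the prox subgradient. You instead use the standard identity $D^+ g(x(t)) = \max_{s\in\partial g(x)}\langle s,\dot x\rangle$ --- valid because a finite-valued CCP $g$ is locally Lipschitz, so the $o(h)$ error between $x(t+h)$ and $x(t)+h\dot x(t)$ is harmless and the one-sided directional derivative equals the support function of $\partial g(x)$ --- and then collapse the paper's two convexity inequalities into the single monotonicity inequality $\langle s-t, x-y\rangle\ge 0$ with $t=(x-y)/\alpha-\nabla f(x)\in\partial g(y)$. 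This is algebraically equivalent but avoids the paper's technical semicontinuity lemma entirely, which is arguably cleaner; note that both routes lean essentially on assumption (A2) (finite-valuedness of $g$), and that in part (ii) both your sketch and the paper's leave the same point implicit, namely the precompactness or uniform-continuity hypothesis needed to run LaSalle/Barbalat, though your observation that $\int_0^\infty\norm{\Fprox(x(t))}{2}^2\,dt\le\alpha V(x(0))$ is a useful half-step toward making that rigorous.
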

\begin{proof}
  For the proximal gradient dynamics~\eqref{eq:prox_dynamics}, let
  $\map{\phi_h}{\real^n}{\real^n}$ denote the $h$-time flow map of the
  dynamics.  Under the assumption of continuously differentiable $f$,
  $\Fprox$ is locally Lipschitz, and thus the flow map $\phi_h$ exists for
  sufficiently small $h \in \real$ forward in time. We
  consider the function $V(x) = F(x) - F^*$, and consider its Dini
  derivative with respect to time, denoting $x(t)$ by $x$ for brevity,
  setting ${z = \prox{\alpha g}(x - \alpha \nabla f(x))}$. \sashachange{We will let $\mathcal{A}$ be the set of nonincreasing sequences with positive entries that converge to $0$ and let $\mcX$ be the set of (vector-valued) sequences converging to $x$. For a sequence $\{x_k\}_{k=1}^\infty \in \mcX$, define $\{s_{x_k}\}_{k=1}^\infty$ to be the sequence defined by $s_{x_k} = \argmin{s \in \real^n} \|s\|_2^2$ s.t. $s \in \partial g(x_k)$ for all $k$. Then,}
\begin{align}
	&\sashachange{D^+ V(x)} \sashachange{= \limsup_{h \to 0^+} \frac{V(\phi_h(x)) - V(x)}{h}} \nonumber\\
	&\sashachange{\overset{(!)}{\leq} \nabla f(x)^\top (z-x) + \limsup_{h \to 0^+} \frac{g(\phi_h(x)) - g(x)}{h}} \nonumber\\
	&\sashachange{\overset{(\star)}{\leq} \nabla f(x)^\top (z-x) + \limsup_{h \to 0^+} \frac{s_{\phi_h(x)}^\top (\phi_h(x)-x)}{h}} \nonumber\\
	&\sashachange{\overset{(\ae)}{\leq} \nabla f(x)^\top (z-x) + \limsup_{h \to 0^+} s_{\phi_h(x)}^\top (z-x)} \nonumber\\
	&\sashachange{\overset{(\diamond)}{=} \nabla f(x)^\top (z-x) + \sup_{\{h_k\} \in \mathcal{A}}\limsup_{k \to \infty} s_{\phi_{h_k}(x)}^\top(z-x)} \nonumber\\
	&\sashachange{\overset{(\pi)}{\leq} \nabla f(x)^\top (z-x) + \sup_{\{x_k\} \in \mathcal{X}}\limsup_{k \to \infty} s_{x_k}^\top(z-x)} \nonumber\\
	&\sashachange{\overset{\varpi}{\leq} \nabla f(x)^\top (z-x) + \max_{{s_x \in \partial g(x)}}s_{x}^\top (z-x),}\label{eq:convergence_argument_start}
\end{align}
\sashachange{where $s_{\phi_{h}(x)}$ is the minimum-norm element in $\partial g(\phi_h(x))$. We clarify each of the above (in)equalities. Inequality $(!)$ holds because $D^+(\varphi + \theta)(t) \leq D^+\varphi(t) + D^+\theta(t)$ and $D^+ f(x) = \nabla f(x)^\top (z-x)$. Inequality $(\star)$ holds because $g$ is convex and $g(x) \geq g(\phi_h(x)) + s_{\phi_h(x)}^\top (x - \phi_h(x))$ for all $s_{\phi_h(x)} \in \partial g(\phi_h(x))$. Inequality $(\ae)$ holds because $x - \phi_{h}(x) = h(z - x) + \mathcal{O}(h^2)$. Finally, inequality $(\varpi)$ is technical, but is essentially a semicontinuity argument and is proved in Lemma~\ref{lem:ccp_subdifferential_continuity}}
 for $s_x \in \partial g(x)$. By convexity of $g(x)$, 
  \begin{align}
 D^+ V(x(t))  \leq \nabla f(x)^\top (z - x) + g(z) - g(x).\label{eq:useful_for_PL}
  \end{align}
 From convexity of $g$, we also have, $g(z) \leq g(x) -  s^\top (x-z)$,
 for all $s \in \partial g(z)$. Using this, we obtain,
  \begin{align*}
 D^+ V(x(t))  &\leq \nabla f(x)^\top (z - x) + g(x) - s^\top(x - z) - g(x) \\
      &=\nabla f(x)^\top (z - x) - s^\top(x - z).
  \end{align*}
 Upon an application of Lemma~\ref{lem:prox-optimality} with $v = x - \alpha\nabla f(x)$, we may choose ${s = - \alpha^{-1}(z - (x - \alpha\nabla f(x)))}$. Therefore,
  \begin{align}
      &D^+ V(x(t)) \nonumber \\
      &\leq \nabla f(x)^\top (z-x) + \alpha^{-1}(z - (x - \alpha \nabla f(x)))^\top (x-z) \nonumber \\
      &= - \frac{1}{\alpha} \norm{x-z}{2}^2 = - \frac{1}{\alpha} \norm{\Fprox(x)}{2}^2. \label{eq:bound_on_D+F}
  \end{align}
 This proves statement~\ref{fact:prox-descent-i}.  Applying a generalization of LaSalle's invariance principle, extended to Dini derivatives, the system converges to the set where $ D^+ V(x) = 0$, which may be characterized by the set $\setdef{x}{x = z}$. Upon an application of Lemma~\ref{lem:prox-optimality} with $v = x - \alpha\nabla f(x)$, we can see that $\setdef{x}{x = z} = \Omega$.

 The final statement follows from substituting~\eqref{eq:better_than_PL} in~\eqref{eq:bound_on_D+F}, and using Gr\"{o}nwall's lemma.
  \end{proof}
\begin{remark}
 Our choice of scaling $\mu$ in Equation~\eqref{eq:better_than_PL} is motivated by an observation that is a natural extension to the existing PL condition~\eqref{eq:PL_condition} for gradient descent. Let us consider the gradient flow given by $\dot x = - \alpha\nabla f(x) = \Fgd(x)$. The PL condition for gradient descent presented in Definition~\ref{def:PL_condition} can be rewritten as, 
  \begin{align*}
 \frac{1}{2}\norm{\Fgd(x)}{}^2 \geq \mu\alpha^2 (f(x) - f(\xstar)).
  \end{align*}
 It is easy to see that under the gradient flow, the PL condition forces an exponential convergence rate of $2\mu\alpha$. We note that both the condition in terms of the flow and the rate are exactly equivalent to what we obtain in the proximal case.
\end{remark}

\begin{remark}
 Unlike their discrete time counterparts, the proximal gradient dynamics are guaranteed to converge for all $\alpha > 0$. In the discrete case, $\alpha$ is forced to be in $(0,1/L]$. We also do not assume that $f$ is $L-$smooth in our proof.
\end{remark}

\begin{arxiv}
  \begin{remark}
    In prior work~\cite{JB:03}, a similar result was proved for the special case where $g(x)$ is the indicator of a convex set. Our result generalizes to all proximal operators.
  \end{remark}
\end{arxiv}

\begin{lemma}
 Under the same assumptions as Theorem~\ref{thm:decreasing-energy-smooth}, the condition~\eqref{eq:better_than_PL} implies
 the KL condition~\eqref{eq:KL_condition}, and by extension also implies the PL condition~\eqref{eq:Prox_PL_condition}.
\end{lemma}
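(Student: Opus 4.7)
The plan is to split the proof into two implications: (i) \eqref{eq:better_than_PL} implies the KL condition~\eqref{eq:KL_condition}, and (ii) the KL condition implies the proximal PL condition~\eqref{eq:Prox_PL_condition}. For step (ii), I would simply invoke the equivalence between these two conditions established in~\cite{HK-JN-MS:16}, so the bulk of the work lies in (i).

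For (i), my strategy is to prove the pointwise inequality $\min_{u \in \partial F(x)} \norm{u}{2}^2 \geq \alpha^{-2}\norm{\Fprox(x)}{2}^2$. Combined with~\eqref{eq:better_than_PL}, this immediately yields the KL condition with $\hat\mu = \mu$. To derive the pointwise bound, I would set $z = \prox{\alpha g}(x - \alpha \nabla f(x))$, so that $\Fprox(x) = z - x$, and then apply two convexity arguments in sequence. First, by Lemma~\ref{lem:prox-optimality}, the vector $v := \alpha^{-1}(x-z) - \nabla f(x)$ lies in $\partial g(z)$; substituting $v$ into the subgradient inequality at $z$ produces the sharp estimate $g(x) - g(z) \geq \alpha^{-1}\norm{x-z}{2}^2 - \nabla f(x)^\top (x-z)$. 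Second, for any $s \in \partial g(x)$ (which is nonempty since $g$ is CCP and finite-valued), the subgradient inequality at $x$ evaluated at $z$ gives $g(x) - g(z) \leq s^\top (x-z)$. Chaining these two bounds yields $(\nabla f(x) + s)^\top (x-z) \geq \alpha^{-1}\norm{x-z}{2}^2$, and Cauchy--Schwarz then delivers $\norm{\nabla f(x) + s}{2} \geq \alpha^{-1}\norm{\Fprox(x)}{2}$ (the case $x = z$ being trivial). Since $\partial F(x) = \nabla f(x) + \partial g(x)$, minimizing over $s \in \partial g(x)$ completes the argument.

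The main obstacle is a subtle point about sharp constants: the elementary prox minimization inequality (obtained from $z$ minimizing $g(u) + \tfrac{1}{2\alpha}\norm{u - (x - \alpha \nabla f(x))}{2}^2$) only yields the weaker estimate $g(x) - g(z) \geq \tfrac{1}{2\alpha}\norm{x-z}{2}^2 - \nabla f(x)^\top (x-z)$, which would degrade $\hat\mu$ by a factor of $\tfrac{1}{4}$. Getting the sharper factor $\alpha^{-1}$ really does require the subgradient-at-$z$ inequality with the explicit subgradient produced by Lemma~\ref{lem:prox-optimality}, and this sharpening is essential for preserving the constant $\mu$ in the transition from~\eqref{eq:better_than_PL} to the KL bound. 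Beyond this, the argument is routine manipulation of subgradients and convexity.
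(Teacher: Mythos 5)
Your proposal is correct and follows essentially the same route as the paper: both arguments chain the subgradient inequality at $x$ with the subgradient inequality at $z$ using the explicit subgradient $\alpha^{-1}(x-z) - \nabla f(x) \in \partial g(z)$ from Lemma~\ref{lem:prox-optimality}, apply Cauchy--Schwarz to get $\norm{s_x}{2} \geq \alpha^{-1}\norm{x-z}{2}$ for every $s_x \in \partial F(x)$, and then invoke~\eqref{eq:better_than_PL} and the equivalence from~\cite{HK-JN-MS:16} for the proximal PL conclusion. Your handling of the $x=z$ case (the bound is trivially satisfied) is in fact slightly cleaner than the paper's appeal to optimality, and your side remark about why the naive prox-minimization inequality would only yield $\hat\mu = \mu/4$ is accurate.
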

\begin{lcss}
  \begin{proof}
    \change{The proof follows from a Cauchy-Schwarz argument, and is available in the arxiv version\footnote{\href{https://arxiv.org/pdf/2409.10664}{https://arxiv.org/pdf/2409.10664}}.}
  \end{proof}
  \end{lcss}
\begin{arxiv}
\begin{proof}
 For every $x$, consider, ${s^\top(-x + z)}$, where ${z = \prox{\alpha g}(x - \alpha \nabla f(x))}$, and $s_x\in \partial F(x)$. Following the arguments in the proof of Theorem~\ref{thm:decreasing-energy-smooth} from~\eqref{eq:convergence_argument_start} to~\eqref{eq:bound_on_D+F}, we have
  \begin{align*}
    &s_x^\top(-x + z) \leq -\frac{1}{\alpha} \norm{x - z}{2}^2\\
 &\quad\implies \norm{s_x}{2}\norm{x - z}{2} \geq \frac{1}{\alpha} \norm{x - z}{2}^2,
  \end{align*}
 where the inequality follows through Cauchy Schwarz. Note that $x = z$ only occurs at the optimal values of $x$, i.e, where $F(x) = F^*$. In the case where $F(x) = F^*$, $\partial F(x) = 0$ through an optimality argument. Therefore the KL condition with exponent half is true. In the case where $x\neq z$, we have for every valid $s_x\in \partial F(x)$,   
  \begin{align*}
 \norm{s_x}{2} \geq \min_{s \in \partial F(x)} \norm{s}{2} \geq \frac{1}{\alpha} \norm{x - z}{2}.
  \end{align*}
 The KL condition follows upon squaring both sides and substituting~\eqref{eq:better_than_PL}.
\end{proof}
\end{arxiv}
In Theorem~\ref{thm:decreasing-energy-smooth}, we show that costs along the trajectories of the proximal gradient dynamics are decreasing, in cases where $g$ is finite-valued. In many practical examples, for instance, in the case where $g$ is an indicator function, this assumption is not met. However, in Theorem~\ref{thm:decreasing-energy-nonsmooth}, we will show that the cost remains non-increasing even for non-finite-valued $g$. 

For the proximal gradient dynamics~\eqref{eq:prox_dynamics}, recall that $\map{\phi_h}{\real^n}{\real^n}$ denotes the $h$-time flow map of the dynamics. We consider the evolution of $F(x) = f(x) + g(x)$ along trajectories of the system by studying the evolution of $t \mapsto F(\phi_t(x))$ for all $x$. 

Now, we are ready to discuss the convergence of $F(\phi_t(x))$ in the case where $g$ takes infinite values. 

\begin{thm}[Nonincreasing cost for proximal gradient dynamics with infinite g]\label{thm:decreasing-energy-nonsmooth}
   Let $\map{f}{\real^n}{\real}$ be continuously differentiable and $\map{g}{\real^n}{\realextended}$ be CCP. Let $x \in \real^n$ and let $\phi_t(x)$ be the $t$-time flow map of the proximal gradient dynamics~\eqref{eq:prox_dynamics}. Then
   the map $t \mapsto F(\phi_t(x))$ is nonincreasing. Further, it is either always equal to $+\infty$, is always finite, or is infinite for a finite amount of time and then becomes finite. 
  \end{thm}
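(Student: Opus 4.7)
The strategy has two ingredients: forward invariance of $\dom g$ under the flow, and a pointwise Dini-derivative descent estimate wherever the trajectory lies in $\dom g$. A preliminary observation is that $\prox{\alpha g}(v) \in \dom g$ for every $v \in \real^n$, since the strongly convex objective defining the proximal operator equals $+\infty$ outside $\dom g$ but is finite on $\dom g$ (which is nonempty as $g$ is proper). Hence $z(x) := \prox{\alpha g}(x - \alpha\nabla f(x)) \in \dom g$ for every $x$, and the vector field $\Fprox(x) = z(x) - x$ always points the state toward a member of $\dom g$.

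The core step is forward invariance of $\dom g$, which I would establish by contradiction. For $x_0 \in \dom g$, set $T^* = \inf\{t > 0 : \phi_t(x_0) \notin \dom g\}$ and suppose $T^* < \infty$. On $[0, T^*)$ the trajectory stays in $\dom g$, so the subgradient/Dini computation of Theorem~\ref{thm:decreasing-energy-smooth} carries over verbatim---the argument only uses $g$ at points where it is finite and subdifferentiable---yielding $D^+ F(\phi_t(x_0)) \leq -\norm{\Fprox(\phi_t(x_0))}{2}^2/\alpha \leq 0$, and hence $F(\phi_t(x_0)) \leq F(x_0) < +\infty$ on $[0,T^*)$. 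Lower semicontinuity of $g$ combined with continuity of $f$ and of $t \mapsto \phi_t(x_0)$ then forces $F(\phi_{T^*}(x_0)) \leq \liminf_{t \to T^{*-}} F(\phi_t(x_0)) < +\infty$, so $\phi_{T^*}(x_0) \in \dom g$. Finally, the integral representation $\phi_{T^*+h}(x_0) = e^{-h}\phi_{T^*}(x_0) + \int_0^h e^{s-h} z(\phi_{T^*+s}(x_0))\, ds$ expresses $\phi_{T^*+h}(x_0)$ as a convex combination of $\phi_{T^*}(x_0) \in \dom g$ and values $z(\phi_{T^*+s}(x_0)) \in \dom g$; combined with convexity of $\dom g$ and the same descent/semicontinuity coupling, this rules out a right-neighborhood of $T^*$ being outside $\dom g$, contradicting the definition of $T^*$.

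With invariance in hand, the theorem follows at once. Let $A = \{t \geq 0 : \phi_t(x_0) \in \dom g\}$. By invariance, $A$ is either empty, equal to $[0,\infty)$ (when $x_0 \in \dom g$), or of the form $[T,\infty)$ for some $T > 0$; these are precisely the three cases in the statement. On $A$, monotonicity of $F(\phi_t(x_0))$ is the Dini estimate above; on $[0,\infty) \setminus A$, $F \equiv +\infty$ is trivially nonincreasing; and the possible drop from $+\infty$ to a finite value at the transition time $T$ is consistent with $t \mapsto F(\phi_t(x_0))$ being nonincreasing.

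The main obstacle is the forward-invariance step, and specifically ruling out the ``grazing'' scenario in which the trajectory momentarily visits $\overline{\dom g} \setminus \dom g$ (where $g = +\infty$ by lower semicontinuity even though the closure is reached). What makes this go through is the combination of (i) convexity of $\dom g$ through the integral representation of $\phi_t$, (ii) lower semicontinuity of $g$ at the limit point $\phi_{T^*}(x_0)$, and (iii) the descent inequality on $[0,T^*)$ that provides a uniform finite upper bound on $F$ along the trajectory up to $T^*$.
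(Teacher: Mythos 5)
Your proof is correct in its overall conclusion and shares the paper's skeleton (a Dini-derivative descent estimate wherever $F\circ\phi_t$ is finite, plus forward invariance of $\dom g$ to obtain the trichotomy), but you reach forward invariance by a genuinely different route. The paper delegates this step to a separate lemma that invokes Nagumo's theorem, arguing that $\Fprox(x) = \prox{\alpha g}(x - \alpha\nabla f(x)) - x$ points into the tangent cone of $\dom F$ at each $x \in \dom F$; it also cites a monotonicity criterion for lower semicontinuous functions with nonpositive upper Dini derivative to handle the case analysis directly, without ever introducing a first exit time. You instead build invariance from scratch via a contradiction at $T^* = \inf\{t : \phi_t(x_0)\notin\dom g\}$, using lower semicontinuity plus the uniform descent bound to place $\phi_{T^*}(x_0)$ back in $\dom g$, and then the variation-of-constants identity $\phi_t(x_0) = \e^{-t}x_0 + \int_0^t \e^{s-t}\prox{\alpha g}(\cdot)\,ds$ together with convexity of $\dom g$. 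Your route is more self-contained and, notably, more robust: Nagumo's theorem as cited requires $\dom F$ to be closed, which can fail for a CCP function, whereas your barycenter argument only needs convexity (and, once you observe that the integral identity already expresses $\phi_t(x_0)$ as the barycenter of a probability measure concentrated on $\dom g$, it yields invariance directly from $t=0$ with no contradiction needed). Two points deserve tightening: the final step, where you "rule out a right-neighborhood of $T^*$," should explicitly invoke the fact that in $\real^n$ the barycenter of a probability measure giving full mass to a convex set lies in that set (your phrasing leaves open whether the integral average might land in $\overline{\dom g}\setminus\dom g$); and the claim that the Theorem~1 computation "carries over verbatim" on $[0,T^*)$ is slightly optimistic, since Lemma~\ref{lem:ccp_subdifferential_continuity} is stated for finite-valued $g$ and subdifferentials can be empty or unbounded on the relative boundary of $\dom g$ --- though the paper's own proof makes exactly the same leap.
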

  \begin{proof}

   For a lower semicontinuous function, we can establish that it is nonincreasing provided that its upper right Dini derivative is nonpositive~\cite[Theorem 1.13]{GG-SK:92}. We note that $F(\phi_t(x))$ is lower semicontinuous, as $g$ is CCP and hence lower semicontinuous, which implies $F$ is as well, which in turn implies that the composition $t \mapsto F(\phi_t(x))$ is lower semicontinuous. Now, we analyse each case separately. If $F(\phi_t(x)) = +\infty$ for all $t \geq 0$, there is nothing to prove. Suppose alternatively that it is always finite. Then we know that the map $F(\phi_t(x))$ is lower semicontinuous and we can compute its Dini derivative, and show that it is nonpositive. Upon noting that the constant $F^*$ does not affect the Dini derivative $D^+ F(\phi_t(x))$, we may use the same arguments obtained in Theorem~\ref{thm:decreasing-energy-smooth}, to obtain 
    \begin{align*}
   D^+ F(\phi_t(x)) &\leq  -\frac{1}{\alpha}\|\phi_t(x) - z\|_2^2 \leq 0.    
    \end{align*}
   In other words, when $F(\phi_t(x))$ is finite, it is nonincreasing. In the case that it is infinite and then becomes finite after some time horizon, the same argument as above applies on the interval of time when $F(\phi_t(x))$ is finite. Due to the forward-invariance of $\dom g$ \change{and Lemma~\ref{lemma:fwd-invariance}}, $F(\phi_t(x))$ is either always equal to $+\infty$, is always finite, or is infinite for a finite amount of time and then becomes finite. In other words, once $F(\phi_t(x))$ is finite, it can never become $+\infty$.
  \end{proof}
\begin{remark}
 Due to the arguments made in this theorem, in the special case where $g$ is the indicator of a convex set, if the initial state is within the feasible set, then trajectories of proximal gradient dynamics always remain within the feasible set. On the other hand, if $g$ is the indicator function of the origin, then the proximal gradient dynamics are ${\dot{x} = -x}$ and for any $x \neq 0$, $F(\phi_t(x))$ is infinite for all $t \geq 0$ even though $\phi_t(x) \to 0$ as $t \to \infty$. 
\end{remark}

\section{Exponential convergence of the cost}\label{sec:PL_condn}
We discuss sufficient conditions for exponential convergence of the cost in~\eqref{eq:opt_problem} under our dynamics~\eqref{eq:prox_dynamics}.
\begin{thm}[Exponential convergence under the PL condition]\label{thm:PL_cts_time}
 For the optimization problem~\eqref{eq:opt_problem} under the proximal gradient dynamics~\eqref{eq:prox_dynamics}, such that $f(x) + g(x)$ satisfies the proximal PL condition~\eqref{eq:Prox_PL_condition} with constants $\mu > 0,\alpha > 0$ and $g$ is CCP,  we have guaranteed global exponential convergence to the minimizer $F^*$ with some rate $c = {\mu\alpha} > 0$, that is,
  \begin{align*}
 F(x(t)) - F^* \leq \e^{-ct} (F(x(0)) - F^*).
  \end{align*}
\end{thm}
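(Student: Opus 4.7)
The plan is to leverage the bound on $D^+ V(x)$ already established in the proof of Theorem~\ref{thm:decreasing-energy-smooth} and connect it directly with the right-hand side of the proximal PL inequality. Set $V(x) = F(x) - F^*$ and $z = \prox{\alpha g}(x - \alpha \nabla f(x))$. From~\eqref{eq:useful_for_PL} we already have the intermediate bound
\begin{align*}
  D^+ V(x(t)) \leq \nabla f(x)^\top (z - x) + g(z) - g(x),
\end{align*}
so the work consists in showing that the right-hand side is dominated by $-\mu\alpha V(x)$.

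The key observation is that $z$ is precisely the maximizer in the variational expression defining $\mcD_g(x,\alpha)$. To see this, I would write out the first-order optimality condition of the inner maximization over $y$ of $\langle \nabla f(x), x-y\rangle - \frac{1}{2\alpha}\|x-y\|_2^2 + g(x) - g(y)$; the stationarity condition reads $x - \alpha\nabla f(x) - y \in \alpha \partial g(y)$, which is exactly the optimality condition characterizing $y = \prox{\alpha g}(x - \alpha\nabla f(x)) = z$ via Lemma~\ref{lem:prox-optimality}. Consequently,
\begin{align*}
  \tfrac{\alpha}{2}\mcD_g(x,\alpha) = \langle \nabla f(x), x-z\rangle - \tfrac{1}{2\alpha}\|x-z\|_2^2 + g(x) - g(z).
\end{align*}

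Now I would rearrange the Dini derivative bound as
\begin{align*}
  D^+V(x(t)) &\leq -\bigl[\langle \nabla f(x), x-z\rangle + g(x) - g(z)\bigr] \\
  &= -\tfrac{\alpha}{2}\mcD_g(x,\alpha) - \tfrac{1}{2\alpha}\|x - z\|_2^2 \\
  &\leq -\tfrac{\alpha}{2}\mcD_g(x,\alpha).
\end{align*}
Applying the proximal PL condition~\eqref{eq:Prox_PL_condition}, which gives $\tfrac{1}{2}\mcD_g(x,\alpha) \geq \mu V(x)$, multiplied by $\alpha$, yields $D^+ V(x(t)) \leq -\mu\alpha V(x(t))$. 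A Gr\"onwall-type comparison for Dini derivatives then produces the stated exponential bound with rate $c = \mu\alpha$.

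The main obstacle is the identification of $z$ as the maximizer in $\mcD_g$; once this is in hand, the argument is a short chain of substitutions. Two minor technical points are worth flagging: the extension of Gr\"onwall's inequality to upper-right Dini derivatives of lower semicontinuous functions (the same tool implicitly used in Theorem~\ref{thm:decreasing-energy-smooth}), and the care needed if $g$ takes the value $+\infty$, where one combines with Theorem~\ref{thm:decreasing-energy-nonsmooth} so that $V(x(t))$ becomes finite in finite time and thereafter the above bound applies.
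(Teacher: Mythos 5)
Your proposal is correct and follows essentially the same route as the paper's proof: both start from the bound in~\eqref{eq:useful_for_PL}, identify $z = \prox{\alpha g}(x - \alpha\nabla f(x))$ as the optimizer in the variational expression defining $\mcD_g(x,\alpha)$, add and subtract $\tfrac{1}{2\alpha}\|x-z\|_2^2$ to obtain $D^+V \leq -\tfrac{\alpha}{2}\mcD_g(x,\alpha)$, and conclude via the proximal PL condition and Gr\"onwall's lemma. Your explicit handling of the case where $V(x(0))$ may be infinite matches the paper's opening remark invoking Lemma~\ref{lemma:fwd-invariance}, so nothing is missing.
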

\begin{proof}
  The proof is deferred to Appendix~\ref{subsec:proof_thm_8}
\end{proof}

\begin{cor}
 If the proximal PL inequality is true for a certain value of $\alpha$, it will also be true for all $\alpha' \in (0,\alpha]$. This follows from the fact that $\mcD_g(x,\alpha') \geq \mcD_g(x,\alpha)$. 
\end{cor}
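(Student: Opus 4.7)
The goal is to establish $\mcD_g(x,\alpha') \geq \mcD_g(x,\alpha)$ for every $\alpha' \in (0,\alpha]$, since once this monotonicity is in hand the corollary is immediate: if $\frac{1}{2}\mcD_g(x,\alpha) \geq \mu(F(x)-F^*)$ holds for all $x$, the same inequality holds with $\alpha$ replaced by any smaller $\alpha'$. My plan is to reparametrize the inner maximization via the substitution $y = x + \alpha d$, under which the objective becomes
\begin{align*}
  -2\langle \nabla f(x), d\rangle - \norm{d}{2}^2 + \frac{2}{\alpha}\bigl(g(x) - g(x+\alpha d)\bigr).
\end{align*}
Thus $\mcD_g(x,\alpha)$ equals the supremum over $d \in \real^n$ of this expression, and the first two terms are independent of $\alpha$; all the $\alpha$-dependence is concentrated in the last term.

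Next, I would exploit convexity of $g$ to show that, for every fixed $d$, the divided-difference $\alpha \mapsto \frac{1}{\alpha}\bigl(g(x)-g(x+\alpha d)\bigr)$ is nonincreasing on $(0,\infty)$. For $0 < \alpha' \leq \alpha$, writing $x + \alpha' d = \tfrac{\alpha'}{\alpha}(x+\alpha d) + (1-\tfrac{\alpha'}{\alpha})x$ and applying Jensen's inequality yields $g(x+\alpha' d) \leq \tfrac{\alpha'}{\alpha} g(x+\alpha d) + (1-\tfrac{\alpha'}{\alpha}) g(x)$, which rearranges directly to $\frac{g(x)-g(x+\alpha' d)}{\alpha'} \geq \frac{g(x)-g(x+\alpha d)}{\alpha}$.

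Putting the two pieces together, the reparametrized objective is pointwise (in $d$) nondecreasing as $\alpha$ decreases, so taking the supremum over $d$ preserves the inequality and yields $\mcD_g(x,\alpha') \geq \mcD_g(x,\alpha)$. The proximal PL inequality then transports from $\alpha$ to any $\alpha' \in (0,\alpha]$, which completes the corollary. The only real obstacle I anticipate is spotting the reparametrization $y = x + \alpha d$ that isolates the $\alpha$-dependence; once that is done, the convexity/Jensen step is essentially a one-liner and the conclusion is automatic.
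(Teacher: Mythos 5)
Your proposal is correct and matches the paper's (unproven) justification: the corollary in the paper simply asserts the monotonicity $\mcD_g(x,\alpha') \geq \mcD_g(x,\alpha)$ without argument, and your reparametrization $y = x + \alpha d$ followed by the standard monotonicity of convex difference quotients is a clean and complete way to establish exactly that fact. No gaps; your write-up supplies the details the paper omits.
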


\begin{remark}
 In~\cite[Theorem 3]{SHM-MRJ:21}, the forward-backward envelope is shown to be globally exponentially stable under the PL conditions. The forward-backward envelope is an upper bound to $f(z) + g(z)$, under the assumption of $L-$smoothness of $f$, where $z = \prox{\alpha g}(x - \alpha \nabla f(x))$.  In contrast, here we are interested in the cost function. \change{The cost function in many optimization problems has some physical interpretation, and hence this result is more intuitive.} 
\end{remark}
\subsection{Extensions to time-varying optimization}
\change{In recent literature~\cite{AD-VC-AG-GR-FB:23f,AA-AS-ED:20,LJ-YZ:14}, variations of the classic problem~\eqref{eq:opt_problem} with time-varying parameters have also been considered. In typical applications, time-varying parameters arise from the evolution of the environment.}
 In continuous time, the goal is to define dynamics that track the minimizer of the optimization problem,
\begin{align}\label{eq:opt_problem_time_varying}
 \min_{x} f(x,\theta(t)) + g(x,\theta(t)) = F(x,\theta(t)).
\end{align}\change{
This formulation is generalizes the typical online optimization setup where $\theta(t) = t$. Under Lipschitz constraints on $\theta$, one may define bounds for the difference of the current cost and optimal cost.  The difference of the  cost incurred and the optimal cost is termed regret, and is a means to evaluate the performance of algorithms. Due to our exponential bound on the cost we may bound the regret at each time.}

\begin{thm}[Cost tracking under the PL condition]\label{thm:PL_time_varying}
  Consider the time-varying optimization
  problem~\eqref{eq:opt_problem_time_varying}. Assume $f(x,\theta(t))$ and
  $g(x,\theta(t))$ satisfy the proximal PL
  condition~\eqref{eq:Prox_PL_condition} with constant $\mu$ at each time
  $t$, $g$ is CCP, and $F(x, \theta)$ is differentiable with respect to,
  and Lipschitz continuous in $\theta$, with parameter $\ell_\theta$. Given
  an initial condition $x(0)$ and a parameter trajectory $\theta(t)$,
  define ${V(t) = F(x(t),\theta(t)) - F(\xstar(t),\theta(t))}$. Then, under
  the proximal gradient dynamics~\eqref{eq:prox_dynamics}
  \begin{enumerate}
  \item $\displaystyle D^+ V(t) \leq -\mu\alpha V(t) + 2 \ell_\theta
    \norm{\dot \theta(t)}{}$, and
  \item the Gr\"{o}nwall inequality for Dini derivatives implies
    \begin{align*}
      V(t) \leq \e^{-\mu\alpha t} V(0) + \frac{2\ell_\theta}{\mu\alpha}
      \int_0^t \e^{-\mu\alpha(t - \tau)}\norm{\dot \theta(\tau)}{} d\tau.
    \end{align*}    
  \end{enumerate}
\end{thm}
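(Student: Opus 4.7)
The plan is to decompose $V(t) = F(x(t),\theta(t)) - F(x^\star(t),\theta(t))$ into an ``$x$-moving, $\theta$-frozen'' piece and a ``$\theta$-moving'' piece, bound the former via Theorem~\ref{thm:PL_cts_time} applied instantaneously at the frozen parameter $\theta(t)$, and bound the latter by the Lipschitz constant $\ell_\theta$ times $\norm{\dot\theta(t)}{}$. Concretely, for small $h > 0$ I would write
\begin{align*}
  &F(\phi_h(x(t)),\theta(t+h)) - F(x(t),\theta(t)) \\
  &\quad = \underbrace{F(\phi_h(x(t)),\theta(t+h)) - F(\phi_h(x(t)),\theta(t))}_{\text{parameter shift on the trajectory}} \\
  &\qquad + \underbrace{F(\phi_h(x(t)),\theta(t)) - F(x(t),\theta(t))}_{\text{flow at frozen }\theta(t)},
\end{align*}
and similarly decompose $F(x^\star(t+h),\theta(t+h)) - F(x^\star(t),\theta(t))$ into a parameter-shift part and a $t$-dependence of $x^\star$ part.

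Dividing by $h$ and taking $\limsup_{h \to 0^+}$, the frozen-$\theta$ term in the first decomposition is, by Theorem~\ref{thm:PL_cts_time} applied pointwise in $t$, bounded by $-\mu\alpha (F(x(t),\theta(t)) - F(x^\star(t),\theta(t))) = -\mu\alpha V(t)$, since the proximal PL condition holds at $\theta(t)$ with constant $\mu$. The parameter-shift term on the trajectory yields $\le \ell_\theta \norm{\dot\theta(t)}{}$ directly from the Lipschitz hypothesis. For the optimal-value term $F(x^\star(t),\theta(t))$, I would invoke an envelope-type argument: since $x^\star(\theta)$ minimizes $F(\cdot,\theta)$ and $F$ is Lipschitz in $\theta$ uniformly in $x$, the map $t \mapsto F(x^\star(t),\theta(t))$ is itself Lipschitz with constant $\ell_\theta \norm{\dot\theta(t)}{}$ (a two-sided comparison at times $t$ and $t+h$ using optimality at both endpoints gives the bound without needing differentiability of $x^\star(\cdot)$). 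Combining these pieces yields $D^+ V(t) \le -\mu\alpha V(t) + 2\ell_\theta \norm{\dot\theta(t)}{}$, which is item~(i).

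For item~(ii), I would apply the standard Grönwall comparison lemma for upper Dini derivatives: if $D^+ V(t) \le -\mu\alpha V(t) + \psi(t)$ with $\psi(t) = 2\ell_\theta \norm{\dot\theta(t)}{}$, then multiplying through by the integrating factor $\e^{\mu\alpha t}$ gives $D^+(\e^{\mu\alpha t} V(t)) \le \e^{\mu\alpha t}\psi(t)$, and integrating from $0$ to $t$ yields the stated variation-of-constants bound, after a trivial rescaling of the integrand.

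The main obstacle I anticipate is the envelope argument for the optimal-value term, since $x^\star(t)$ need not be differentiable (or even unique), so one cannot blindly differentiate $F(x^\star(t),\theta(t))$. The clean workaround is to avoid ever computing $\dot x^\star$: use optimality at $\theta(t)$ and $\theta(t+h)$ with the uniform Lipschitz continuity of $F$ in $\theta$ to sandwich $F(x^\star(t+h),\theta(t+h)) - F(x^\star(t),\theta(t))$ between $\pm \ell_\theta \norm{\theta(t+h)-\theta(t)}{}$. Everything else is essentially a reprise of the time-invariant analysis in Theorem~\ref{thm:PL_cts_time} combined with a routine Dini-Grönwall step.
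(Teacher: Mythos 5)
Your proposal is correct and follows the same overall decomposition as the paper: the $x$-motion at frozen $\theta$ is bounded by $-\mu\alpha V(t)$ via the proximal PL argument of Theorem~\ref{thm:PL_cts_time}, the $\theta$-motion contributes $\ell_\theta\norm{\dot\theta}{}$ twice (once for the trajectory cost, once for the optimal cost), and a Dini--Gr\"onwall step finishes. The one place you genuinely diverge is the treatment of $t \mapsto F(\xstar(t),\theta(t))$: the paper writes a formal chain rule $s_2^\top \dot{x}^\star(t) + \nabla_\theta F(\xstar,\theta)^\top\dot\theta$ and kills the first term by choosing $s_2 = 0 \in \partial_x F(\xstar,\theta)$ from optimality, which tacitly requires $\xstar(\cdot)$ to be differentiable (and single-valued); your two-sided sandwich using optimality at both $\theta(t)$ and $\theta(t+h)$ together with uniform Lipschitzness in $\theta$ gets the same bound $\ell_\theta\norm{\dot\theta}{}$ without ever invoking $\dot{x}^\star$, so it is the more robust route and correctly anticipates the weak point of the chain-rule version. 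One small remark on item (ii): the direct variation-of-constants computation you describe yields $V(t) \leq \e^{-\mu\alpha t}V(0) + 2\ell_\theta\int_0^t \e^{-\mu\alpha(t-\tau)}\norm{\dot\theta(\tau)}{}\,d\tau$, i.e.\ with prefactor $2\ell_\theta$ rather than $2\ell_\theta/(\mu\alpha)$; no ``rescaling of the integrand'' is needed, and the extra $1/(\mu\alpha)$ in the stated bound does not come out of Gr\"onwall as written.
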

\begin{lcss}
\begin{proof}
  \change{The proof follows from Lipschitz arguments and is available in the arxiv version\footnote{\href{https://arxiv.org/pdf/2409.10664}{https://arxiv.org/pdf/2409.10664}}.}
\end{proof}
\end{lcss}
\begin{arxiv}
\begin{proof}
 Through arguments similar to the proofs of Theorems~\ref{thm:decreasing-energy-smooth} and for any $s_1 \in \partial_x F(x,\theta), s_2 \in \partial_x F(\xstar,\theta)$,
  \begin{align}
 D^+V(x,t) &\leq s_1^\top \dot x + (\nabla_\theta F(x,\theta) - \nabla_\theta F(\xstar,\theta))^\top \dot \theta \nonumber\\
    & \quad - s_2^\top \dot{x}^\star(t).
  \end{align}
 Following the arguments of Theorem~\ref{thm:PL_cts_time}, the first term is bounded above by $-\alpha\mu(V(x,t))$. Since $F$ is Lipschitz in $\theta$, $\norm{\nabla_{\theta} F(x)}{} \leq \ell_\theta$. Through an application of Cauchy Schwarz and triangle inequality, the second term may be bounded by $2\ell_\theta \norm{\dot \theta}{}$
 The final term is $0$ due to the optimality of $\xstar$. In summary, we have 
  \begin{align*}
 D^+ V(x,t) \leq -\mu\alpha V(x,t) + 2 \ell_\theta \norm{\dot \theta}{}.
  \end{align*}
 Using Gr\"{o}nwall's lemma, we obtain the second result.
\end{proof}
\end{arxiv}
\section{Applications and Examples}\label{sec:apps}

\subsubsection*{Example \#1: Least Squares with \(\ell_1\) regularization}

Through the proximal gradient dynamics, one may solve the $\ell_1$ regularized least squares problem, otherwise known as the Least Absolute Shrinkage and Selection Operator (LASSO). The problem is given by
\begin{align}
 \min_{x \in \R^n} \frac{1}{2}\norm{Ax - u}{2}^2 + \lambda\norm{x}{1},\label{eq:lasso}
\end{align}
where $A\in \R^{m\times n},\; u \in \R^m$ 
The corresponding proximal gradient dynamics, referred to as the ``firing rate competitive network"~\cite{VC-AG-AD-GR-FB:23a} are given by
\begin{align}
 \dot x = -x + \softtbig{\alpha\lambda}{(I_n - \alpha A^\top A)x + \alpha A^\top u}.\label{eq:lasso_prox}
\end{align}
Here, $\softt{\alpha\lambda}{\cdot}$ represents the soft thresholding
operator and is the proximal operator of the $\ell_1$ norm:
\begin{align*}
 \softt{\alpha\lambda}{x_i} = \begin{cases}
 x_i - \alpha\lambda\sign{x_i} & \text{if} \quad |x_i| > \alpha\lambda, \\
    0 & \text{if} \quad |x_i| \leq \alpha\lambda
  \end{cases}
\end{align*}
The paper~\cite{VC-AG-AD-GR-FB:23a} also includes a bound on the
convergence to the optimal value of $x$ under the restricted isometry
property.
\begin{cor}
  Under the proximal gradient dynamics~\eqref{eq:lasso_prox} for the LASSO
  problem~\eqref{eq:lasso}, the cost function decays to the optimal value
  exponentially.
\end{cor}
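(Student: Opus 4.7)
The plan is to invoke Theorem~\ref{thm:PL_cts_time} by verifying that the LASSO objective satisfies the proximal Polyak-\L ojasiewicz condition~\eqref{eq:Prox_PL_condition}, after checking the other hypotheses. Writing $f(x) = \tfrac{1}{2}\|Ax-u\|_2^2$ and $g(x) = \lambda\|x\|_1$, we immediately have that $f$ is continuously differentiable (in fact quadratic, hence $L$-smooth with $L = \|A^\top A\|_2$) and $g$ is CCP and finite-valued on all of $\real^n$. A minimizer $x^\star$ exists because $F$ is convex, coercive in the directions where $A$ has trivial kernel, and bounded below by zero, so $F^\star$ is attained (possibly at a non-unique set of minimizers when $A$ is rank-deficient).

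The central step is to establish the proximal PL condition for the LASSO objective. I would appeal to the result of Karimi, Nutini, and Schmidt~\cite{HK-JN-MS:16}, which proves that whenever $f$ is a composition of a strongly convex function with a linear map (here $y \mapsto \tfrac{1}{2}\|y-u\|_2^2$ composed with $A$) and $g$ is a polyhedral CCP function (here $\lambda\|\cdot\|_1$), the sum $F = f + g$ satisfies the proximal PL condition~\eqref{eq:Prox_PL_condition} for $\alpha = 1/L$ with some constant $\mu > 0$ depending on $A$, $u$, and $\lambda$. By the Corollary following Theorem~\ref{thm:PL_cts_time}, if the condition holds at $\alpha = 1/L$ it also holds for every smaller step, so for the chosen proximal step $\alpha$ in~\eqref{eq:lasso_prox} (with $\alpha \le 1/L$) the proximal PL condition is valid with the same constant $\mu$.

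Having verified the hypotheses, I would then apply Theorem~\ref{thm:PL_cts_time} directly to the trajectories of~\eqref{eq:lasso_prox} and conclude that
\begin{equation*}
  F(x(t)) - F^\star \;\leq\; \e^{-\mu\alpha\, t}\bigl(F(x(0)) - F^\star\bigr),
\end{equation*}
which is exactly the claimed exponential decay of the LASSO cost along the firing rate competitive network dynamics.

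The main obstacle is that LASSO is not strongly convex (the Hessian $A^\top A$ is only positive semidefinite when $m < n$, as is typical for LASSO), so the exponential rate cannot be read off from classical strong-convexity based analyses, and this is precisely why the proximal PL route is needed. Producing an explicit closed-form constant $\mu$ would require either a Hoffman-bound argument on the polyhedral structure of $\partial(\lambda\|\cdot\|_1)$ or a quadratic growth estimate around the optimal set; I would avoid computing $\mu$ explicitly and simply invoke the existence statement from~\cite{HK-JN-MS:16}, which is enough to establish exponential convergence.
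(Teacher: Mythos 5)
Your argument is correct and follows exactly the route the paper takes: cite Karimi--Nutini--Schmidt~\cite{HK-JN-MS:16} for the fact that the LASSO objective satisfies the proximal PL condition, then invoke Theorem~\ref{thm:PL_cts_time} to get exponential decay of the cost. The paper's own proof is just that two-step invocation, so your additional verifications (smoothness of $f$, CCP-ness of $g$, the $\alpha \le 1/L$ remark via the corollary) are a more careful spelling-out of the same argument rather than a different one.
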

\begin{proof}
  The LASSO problem is known to satisfy the PL
  condition~\cite{HK-JN-MS:16}. The result follows from
  Theorems~\ref{thm:PL_cts_time}.
\end{proof}

\subsubsection*{Example \#2: Matrix based problems}
 Next, consider matrix optimization problems. In this setting, the
analogue of sparse vectors are low rank matrices. However, similar to the
$\ell_0$ norm, imposing a low rank constraint leads to nonconvexity. A convex relaxation for the $\ell_0$ norm is the trace norm. The
proximal operator of $g(X) = \norm{X}{*}$ is
\begin{align}
  \prox{\alpha g}(X) = \Softt{\alpha}{X} =  U\Sigma_\alpha V^\top,
\end{align}
where $X\in \R^{m\times n}, X = U\Sigma V^\top$ is an SVD and
$\Sigma_{\alpha\lambda}$ is diagonal with $(\Sigma_{\alpha\lambda})_{ii} =
\softt{\alpha\lambda}{\Sigma_{ii}}$~\cite{AB:17}. A general class of
problems including \emph{matrix recovery} and \emph{matrix completion
problems} can be written as
\begin{align}\label{eq:matrix-recovery}
  \min_{X\in \R^{m\times n}}  \frac{1}{2}\norm{y - \mathcal{A}[X]}{2}^2 + \lambda\norm{X}{*}
\end{align}
where ${\mathcal{A}[X] = [\tr(A_1^\top X), \tr(A_2^\top X),\cdots \tr(A_k^\top X)]^\top}$, with each $A_i \in \R^{m\times n}$
and ${y \in \R^k}$. The proximal gradient dynamics for this problem are
\begin{align}\label{eqn:mat_recon}
  \dot X = - X + \Softtbig{\alpha\lambda}{X - \alpha \sum\nolimits_{i=1}^k(\tr(A_i^\top X) - y_i)A_i},
\end{align}
A second popular matrix based optimization problem is the \emph{matrix
factorization problem}, with applications in
overparameterized linear neural networks with mean square error
loss~\cite{ZX-HM-ST-EM-RV:23}. In general, this problem is nonconvex. To
promote a low-rank solution, it is customary to add terms proportional to
the nuclear norm and write the problem as,
\begin{align}\label{eq:matrix-factor}
  \min_{W_1,W_2} \frac{1}{2} \norm{Y - XW_1W_2}{F}^2 + \lambda\norm{W_1}{*} + \lambda\norm{W_2}{*},
\end{align}
where $Y\in \R^{N\times m}, X\in \R^{N\times n}, W_1 \in \R^{n \times h}$
and $W_2 \in \R^{h \times m}$. The proximal gradient dynamics are
\begin{subequations}\label{eqn:mat_factor}
  \begin{equation}
    \dot W_1 = -W_1 + \Softtbig{\alpha\lambda}{X - \alpha X^\top(Y - XW_1W_2)W_2^\top},
  \end{equation}
  \begin{equation}
    \dot W_2 = -W_2 + \Softtbig{\alpha\lambda}{X - \alpha W_1^\top X^\top(Y - XW_1W_2)}
  \end{equation}
\end{subequations}
Both~\eqref{eqn:mat_recon} and~\eqref{eqn:mat_factor} are guaranteed to
converge to a minimizer or a saddle point via
Theorem~\ref{thm:decreasing-energy-smooth}. It is an open question whether the costs in~\eqref{eq:matrix-recovery} and~\eqref{eq:matrix-factor} are exponentially decreasing along trajectories of~\eqref{eqn:mat_recon} and~\eqref{eqn:mat_factor}, respectively. 
\change{
\subsubsection*{Example \#3: Sparse neural network training}
We conduct a numerical experiment using proximal gradient dynamics to train a feedforward neural network with two hidden layers (dimensions 8 and 2) and a one-dimensional output with $\tanh(\cdot)$ activation on each layer. The network is trained on the scipy moons dataset ($N=200$) for classification, using an $\ell_1$ regularizer and binary cross-entropy loss. For the purposes of visualization, we vary two first-layer weights, freezing the other weights, and show the evolution of the dynamics in Figure~\ref{fig:loss_landscape}, overlaid on a loss landscape. Since $f$ is differentiable and $g$ is CCP, we observe that the cost is decaying, verifying Theorem~\ref{thm:decreasing-energy-smooth}.\footnote{We provide additional implementation details on  \href{https://github.com/AnandGokhale/Proximal-Gradient-Dynamics}{Github}} These dynamics could easily be implemented for the purposes of training the whole network. Implementations of such continuous time algorithms on analog circuits could reduce power consumption.}

\begin{figure}
  \centering
  \includegraphics[width=0.45\textwidth]{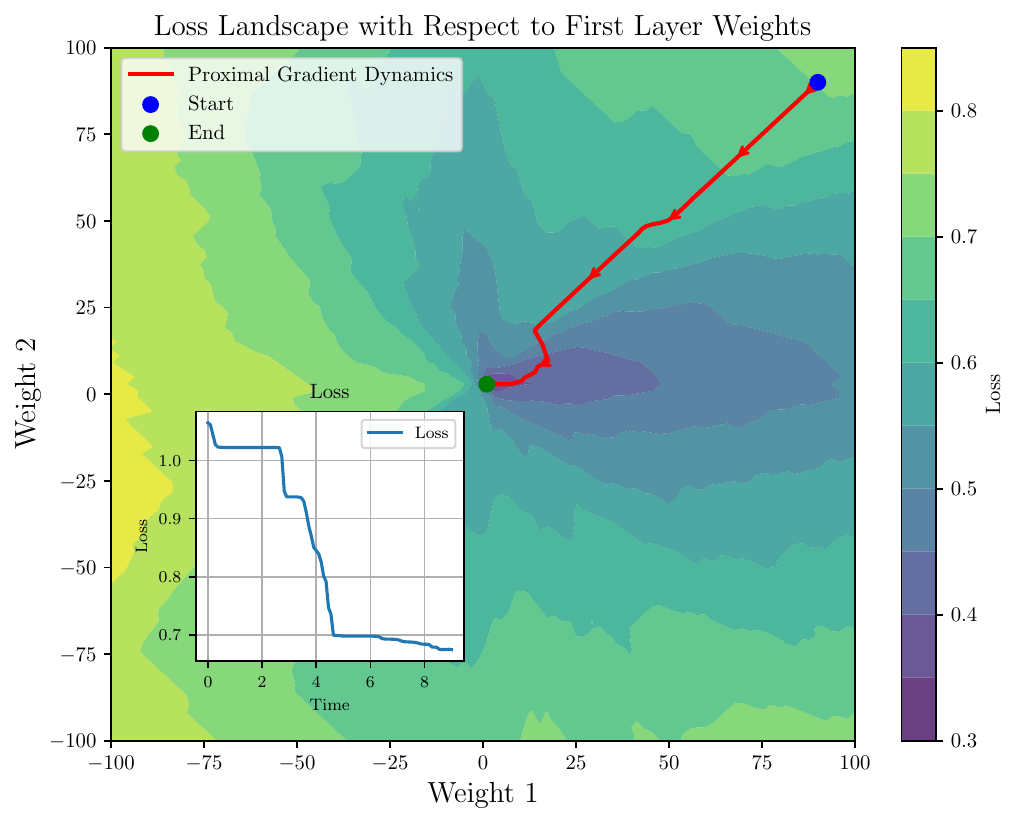} 
  \caption{\change{Nonconvex loss landscape of a regularized feed-forward neural network with the trajectory of proximal gradient dynamics. The trajectory goes along a path where the cost function is monotonically decreasing, since the loss function is differentiable and the nonsmooth regularizer is CCP.}}
  \label{fig:loss_landscape}
\end{figure}
\section{Discussion}\label{sec:conclusion}
In this letter, we discuss cost based convergence properties of the proximal gradient dynamics. We prove in Theorem~\ref{thm:decreasing-energy-smooth} that the cost function of the associated optimization problem is nonincreasing for finite-valued regularizers. For non-finite-valued regularizers, we are able to show that the cost is nonincreasing in Theorem~\ref{thm:decreasing-energy-nonsmooth}. Next, we extend the PL conditions to continuous-time proximal gradient dynamics, explicitly showing that the cost function decays exponentially under the PL condition. We use this result to consider the case of parametric optimization problems with time-varying parameters, and provide a cost function tracking bound. 

An interesting direction of future work is the application of proximal gradient dynamics to design continuous-time solvers, such as analog circuits and biologically plausible neural networks. Further, these dynamics may also allow a generalized nonsmooth analysis of systems based on energy landscapes. A system that is currently studied under this framework is the human brain~\cite{SG-MC-BB-SFM-STG-FP-DSB:18}. While previous work~\cite{SCS-JPP-WG-KB:20} suggests that the algorithms we design for modeling a brain must be gradient-based, due to our analysis proximal gradient dynamics, one could design biologically plausible algorithms for a nonsmooth energy landscape.

\appendix
\subsection{Technical Results}


\begin{lemma}[\change{Proximal optimality condition~\cite{NP-SB:14}}]\label{lem:prox-optimality}
  Let $\map{g}{\real^n}{\realextended}$ be CCP. Then for $\change{v} \in \real^n, \alpha > 0$, define $z := \prox{\alpha g}(v)$. Then
  $
        \alpha^{-1}\bigl(v - z\bigr) \in \partial g(z).$
  \end{lemma}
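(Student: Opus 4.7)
The plan is to proceed directly from the definition of the proximal operator via first-order optimality conditions. By definition, $z = \prox{\alpha g}(v)$ is the unique minimizer of the strongly convex map $u \mapsto \alpha g(u) + \tfrac{1}{2}\|u - v\|_2^2$; uniqueness follows because $g$ is convex (CCP) while the quadratic term is $1$-strongly convex, making the sum strongly convex and coercive, so the argmin exists and is a singleton.

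Next, I would apply the Fermat rule for convex subdifferentials: $z$ is a minimizer if and only if $0 \in \partial\bigl(\alpha g(\cdot) + \tfrac{1}{2}\|\cdot - v\|_2^2\bigr)(z)$. Since the quadratic $u \mapsto \tfrac{1}{2}\|u - v\|_2^2$ is differentiable everywhere with gradient $u - v$, the Moreau--Rockafellar sum rule applies without any qualification condition and gives
\begin{equation*}
0 \in \alpha\, \partial g(z) + (z - v).
\end{equation*}
Rearranging, $v - z \in \alpha\, \partial g(z)$, and dividing by $\alpha > 0$ (which is legitimate because $\partial g(z)$ is a convex cone-compatible set in the sense that $\alpha S = \{\alpha s : s \in S\}$) yields $\alpha^{-1}(v - z) \in \partial g(z)$, as claimed.

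There is essentially no main obstacle here: the only subtle point is justifying the sum rule for subdifferentials in the presence of an extended-real-valued $g$, but this is standard since one summand is real-valued and smooth. If one prefers to avoid invoking the sum rule, an equivalent approach is to write the optimality directly from the definition: for every $u \in \real^n$,
\begin{equation*}
\alpha g(z) + \tfrac{1}{2}\|z - v\|_2^2 \leq \alpha g(u) + \tfrac{1}{2}\|u - v\|_2^2,
\end{equation*}
and rearranging gives $\alpha(g(u) - g(z)) \geq \tfrac{1}{2}\|z - v\|_2^2 - \tfrac{1}{2}\|u - v\|_2^2 = \langle z - u, z - v\rangle - \tfrac{1}{2}\|u - z\|_2^2 \geq \langle z - u, z - v \rangle - \tfrac{1}{2}\|u-z\|_2^2$. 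Substituting $u = z + t(w - z)$ for $t \in (0,1]$, using convexity of $g$, dividing by $t$, and letting $t \to 0^+$ produces the subgradient inequality $g(w) - g(z) \geq \langle \alpha^{-1}(v - z), w - z\rangle$ for all $w$, which is precisely the statement $\alpha^{-1}(v - z) \in \partial g(z)$.
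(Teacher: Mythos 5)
Your argument is correct: the paper states this lemma without proof, citing the standard reference, and your derivation via Fermat's rule plus the sum rule (or, equivalently, the direct perturbation argument $u = z + t(w-z)$, $t \to 0^+$) is exactly the standard proof that the citation points to. The only cosmetic issues are the redundant repeated inequality in your final display and that you could note $g(z)$ is finite (which follows since $g$ is proper and $z$ attains the finite minimum), but neither affects correctness.
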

  

\begin{lemma}[Forward-invariance of $\sashachange{\dom F}$]\label{lemma:fwd-invariance}
  The domain of $F$ is forward-invariant under the proximal gradient dynamics~\eqref{eq:prox_dynamics}. In other words, for all ${t \geq 0}$, ${x \in \dom F}$ implies ${\phi_t(x) \in \dom F}$.
\end{lemma}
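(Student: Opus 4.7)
The plan is to establish forward-invariance via an explicit integral representation of the trajectory combined with Jensen's inequality. Since $f$ is everywhere continuously differentiable, $\dom F = \dom g$, and the task reduces to showing that if $x_0 \in \dom g$ then $\phi_t(x_0) \in \dom g$ throughout the interval of existence of the flow; local existence follows from Picard--Lindel\"{o}f, since the proximal operator is nonexpansive and $\nabla f$ is continuous, which makes $\Fprox$ locally Lipschitz.

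First, I would rewrite the dynamics as $\dot x = -x + z(t)$ with $z(t) := \prox{\alpha g}(x(t) - \alpha \nabla f(x(t))) \in \dom g$ and apply variation of parameters to obtain
\begin{align*}
x(t) = e^{-t} x_0 + (1-e^{-t})\, \bar z(t), \quad \bar z(t) := \int_0^t \frac{e^{s-t}}{1-e^{-t}}\, z(s)\, ds,
\end{align*}
which exhibits $x(t)$ as a convex combination of $x_0 \in \dom g$ and a weighted average $\bar z(t)$ of the values $z(s) \in \dom g$, taken against the probability density $w(s) = e^{s-t}/(1-e^{-t})$ on $[0,t]$. Next, I would bound $g(z(s))$ uniformly on $[0,t]$ by comparing the proximal objective at the feasible test point $x_0$:
\begin{align*}
g(z(s)) \leq g(x_0) + \frac{1}{2\alpha}\|x_0 - v(s)\|^2, \quad v(s) := x(s) - \alpha \nabla f(x(s)),
\end{align*}
and since $x$ is continuous on the compact interval $[0,t]$, the right-hand side is bounded above by some finite constant $M$. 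Finally, the integral form of Jensen's inequality gives $g(\bar z(t)) \leq \int_0^t w(s)\, g(z(s))\, ds \leq M < \infty$, so $\bar z(t) \in \dom g$; convexity of $g$ then yields $g(x(t)) \leq e^{-t} g(x_0) + (1-e^{-t}) g(\bar z(t)) < \infty$, i.e., $x(t) \in \dom g$.

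The main obstacle is the careful application of Jensen's inequality to a CCP function that may take the value $+\infty$: the integrand $g(z(s))$ lies a priori only in $(-\infty, M]$, so I would verify that Jensen's inequality remains valid in this extended-real-valued setting---for example by truncating to $g \wedge M$ and appealing to standard Jensen, or by observing directly that the right-hand integral is well defined in $[-\infty, M]$, which already delivers the desired upper bound on $g(\bar z(t))$. The remaining pieces are essentially mechanical: the integral representation is just variation of parameters applied to the scalar-linear part $-x$, and the pointwise bound on $g(z(s))$ is immediate from the optimality defining $\prox{\alpha g}$.
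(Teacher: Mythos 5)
Your proof is correct, but it takes a genuinely different route from the paper. The paper's argument is a one-line appeal to Nagumo's viability theorem: since $\Fprox(x) = z - x$ with $z = \prox{\alpha g}(x - \alpha\nabla f(x)) \in \dom g$ and $\dom g = \dom F$ is convex, the vector field lies in the tangent cone to $\dom F$ at every point of $\dom F$, hence the set is forward-invariant. Your argument instead integrates the dynamics explicitly, exhibits $\phi_t(x_0)$ as a convex combination of $x_0$ and the weighted average $\bar z(t)$ of the prox outputs, bounds $g(z(s))$ uniformly via the prox optimality inequality at the test point $x_0$ (this is where $x_0 \in \dom g$ is used), and closes with Jensen. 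Both are valid; what yours buys is that it is self-contained and, notably, does not require $\dom F$ to be \emph{closed} --- a property the paper asserts but which can fail for general CCP $g$ (e.g.\ $g(x) = -\log x$) and which Nagumo's theorem needs. What the paper's route buys is brevity and no measure-theoretic bookkeeping. Two small remarks on your version: the Jensen subtlety you flag can be sidestepped entirely, since $s \mapsto z(s)$ is continuous (prox is nonexpansive), so $\{z(s) : s \in [0,t]\}$ is a compact subset of the convex set $\dom g$, and $\bar z(t)$ lies in its convex hull, which is contained in $\dom g$; and your claim that continuity of $\nabla f$ makes $\Fprox$ locally Lipschitz is loose (it gives existence via Peano but not uniqueness) --- though the paper makes the identical claim, and the invariance argument applies to any solution.
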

\begin{proof}
 \begin{lcss}
The proof is a consequence of Nagumo's theorem\change{~\cite[Theorem~3.1]{FB:99}} since $\prox{\alpha g}(x - \alpha \nabla f(x)) \in \dom g$ for all $x \in \real^n$, and $\dom g = \dom F$.
\end{lcss}
\begin{arxiv}
 First, note that $\dom F$ is a closed and convex set and that $\dom F = \dom g$. Nagumo's theorem~\cite[Theorem~3.1]{FB:99}, states that the $\dom g$ is forward invariant provided that $\Fprox(x)$ lies in the tangent cone to $\dom F$ at $x$ for all $x \in \dom F$. By definition of $\prox{}$, we have that $\prox{\alpha g}(x-\alpha \nabla f(x)) \in \dom g = \dom F$ for all $x \in \real^n$ and that $z - x$ lies in the tangent cone of $\dom F$ at $x$ for $z,x \in \dom F$. Thus, the result is proved. 
\end{arxiv}
\end{proof}

\begin{lemma}\label{lem:ccp_subdifferential_continuity}
	Let $\map{g}{\real^n}{\real}$ be a CCP function and let $\{x_k\}_{k=1}^\infty \subseteq \real^n$ be a sequence such that $x_k \to x$ as $k \to \infty$. Then for any $v \in \real^n$,
	\begin{equation}\label{eq:technical-lemma}
		\sashachange{\limsup_{k \to \infty} s_{x_k}^\top v \leq \max\setdef{s^\top v}{s \in \partial g(x)},}
	\end{equation}
	\sashachange{where the sequence $\{s_{x_k}\}_{k=1}^\infty$ is given by $s_{x_k} = \argmin{s \in \real^n} \|s\|_2^2$ s.t. $s \in \partial g(x_k)$.}
\end{lemma}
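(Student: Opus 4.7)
The plan is to reduce \eqref{eq:technical-lemma} to two standard facts from convex analysis applied to the minimum-norm selections $s_{x_k}$: (i) local boundedness of $\partial g$ at interior points of its domain, and (ii) outer semicontinuity (i.e., closed graph) of the subdifferential mapping.

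First, I would establish that the sequence $\{s_{x_k}\}_{k=1}^\infty$ is bounded. Since $g$ is assumed CCP and finite-valued on all of $\real^n$, we have $\dom g = \real^n$, and so $x$ lies in the interior of $\dom g$. A standard result in convex analysis then gives that $\partial g$ is locally bounded at $x$: there exist $r,M>0$ such that $\|s\|_2 \leq M$ for every $s \in \partial g(y)$ and every $y$ with $\|y-x\|_2 \leq r$. Since $x_k \to x$, for all sufficiently large $k$ we have $\|s_{x_k}\|_2 \leq M$ (in fact the minimum-norm element is even better controlled, but we only need boundedness).

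Next, I would pass to a subsequence that realizes the $\limsup$. Choose a subsequence $\{x_{k_j}\}$ with $s_{x_{k_j}}^\top v \to \limsup_{k\to\infty} s_{x_k}^\top v$. By the boundedness established above and the Bolzano--Weierstrass theorem, extract a further subsequence (still denoted $\{s_{x_{k_j}}\}$ for notational convenience) that converges in $\real^n$ to some $s^\star \in \real^n$. Then
\begin{align*}
\limsup_{k\to\infty} s_{x_k}^\top v = \lim_{j \to \infty} s_{x_{k_j}}^\top v = (s^\star)^\top v.
\end{align*}

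Finally, I would invoke the outer semicontinuity of $\partial g$. Since $g$ is CCP, its subdifferential has closed graph: if $y_j \to y$ and $s_j \in \partial g(y_j)$ with $s_j \to s^\star$, then $s^\star \in \partial g(y)$. Applied with $y_j = x_{k_j}$ and $s_j = s_{x_{k_j}}$, this yields $s^\star \in \partial g(x)$. Consequently,
\begin{align*}
(s^\star)^\top v \leq \max\setdef{s^\top v}{s \in \partial g(x)},
\end{align*}
and combining this with the previous display gives \eqref{eq:technical-lemma}.

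The main obstacle is really just verifying that the two convex-analytic properties (local boundedness and closed graph of $\partial g$) apply here. Both are standard for CCP functions that are finite-valued on $\real^n$, so I would cite them rather than reprove them; any subtlety would arise only if $g$ were allowed to take the value $+\infty$ at $x$, which is explicitly excluded by the hypothesis $g:\real^n \to \real$ in the statement. Note that we never need the specific minimum-norm character of $s_{x_k}$ in the argument above beyond boundedness, so the proof works for any measurable selection $s_{x_k} \in \partial g(x_k)$.
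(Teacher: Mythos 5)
Your proposal is correct and follows essentially the same route as the paper's own proof: local boundedness of $\partial g$ for a finite-valued convex function, Bolzano--Weierstrass to extract a convergent subsequence of $\{s_{x_k}\}$, and outer semicontinuity (closed graph) of the subdifferential to place the limit in $\partial g(x)$. Your version of the final step (first selecting a subsequence that realizes the scalar $\limsup$, then extracting a convergent vector subsequence) is a slightly more explicit rendering of the paper's cluster-point argument, but it is the same idea.
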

\begin{proof}
	\sashachange{Since $g$ is convex and finite-valued, it is locally Lipschitz and thus $\partial g$ is bounded on any compact set~\cite[Prop.~16.20]{HHB-PLC:17}. Since $x_k \to x$, we can assume that $\{x_k\}$ lies in a compact set for sufficiently large $k$. Thus, since $s_{x_k} \in \partial g(x_k)$, we also know that $\{s_{x_k}\}$ is bounded as well for sufficiently large $k$. 
	We study the set of cluster points of $\{s_{x_k}\}$ which correspond exactly to the set of points which are the limit of some subsequence of $\{s_{x_k}\}$. We know that at least one cluster point exists by the Bolzano-Weierstrass Theorem. Thus, let $s$ be a cluster point of $\{s_{x_k}\}$. Then since $s$ is the limit of a subsequence of $\{s_{x_k}\}$, and the subdifferential of a convex function is outer semicontinuous~\cite[Prop.~16.36]{HHB-PLC:17}, we know that $s \in \partial g(x)$. In other words, every cluster point of $\{s_{x_k}\}$ is in $\partial g(x)$. And since the $\limsup$ of a sequence is equal to the greatest cluster point, we conclude~\eqref{eq:technical-lemma}.}
\end{proof}
\subsection{Proof of Theorem~\ref{thm:PL_cts_time}}~\label{subsec:proof_thm_8}

  Note that if $V(x(0))$ is infinity, there is nothing to show. If $V(x(0))$ is finite, \change{then $V(x(t))$ is finite for all $t \geq 0$ by Lemma~\ref{lemma:fwd-invariance}}. Similar to the proof of Theorem~\ref{thm:decreasing-energy-smooth}, consider the Dini derivative of ${V(x(t)) =  f(x(t)) + g(x(t)) - F^*}$. Using the arguments presented in Theorem~\ref{thm:decreasing-energy-smooth} upto Equation~\eqref{eq:useful_for_PL},
  \begin{align}
    D^+V(x) \leq  \nabla f(x)^\top(z - x) + g(z) - g(x).\label{eq:start_exp_bound}
       \end{align}  
     where $z = \prox{\alpha g}(x - \alpha \nabla f(x))$. Also, note that
  \begin{align}
    z 
         &= \argmin{u} \bigl\{  g(u) {-} g(x) + \frac{1}{2\alpha}\norm{u - x}{}^2 + \langle\nabla f(x), u {-} x\rangle \bigr\}. \nonumber
     \end{align}
  Therefore,
     \begin{align}
         &\nabla f(x)^\top(z - x) + g(z) - g(x) + \frac{1}{2\alpha} \norm{x - z}{2}^2  \nonumber\\
         &\quad= -\frac{\alpha}{2} \mcD_g(x, \alpha). \nonumber
     \end{align}
  Adding and subtracting $\frac{1}{2\alpha} \norm{x - z}{2}^2$ in~\eqref{eq:start_exp_bound},
     \begin{align}
  D^+V(x) &\leq -\frac{\alpha}{2}\mcD_g(x, \alpha) - \frac{1}{2\alpha} \norm{x - z}{2}^2 .
     \end{align}   
  Using the proximal PL condition~\eqref{eq:Prox_PL_condition},
     \begin{align}
  D^+V(x) &\leq -\alpha\mu (F(x) - F(\xstar)) - \frac{1}{2\alpha} \norm{x - z}{2}^2 .
     \end{align}  
  Now, the second term may be bounded above by $0$. Using Gr\"{o}nwall's lemma we get our required result.

  \bibliography{alias, Main, FB,New}

\begin{thebibliography}{10}

\bibitem{AA-AS-ED:20}
Amirhossein Ajalloeian, Andrea Simonetto, and Emiliano Dall’Anese.
\newblock Inexact online proximal-gradient method for time-varying convex
  optimization.
\newblock In {\em American Control Conference}, pages 2850--2857, 2020.

\bibitem{AA-JC:23}
A.~Allibhoy and J.~Cortés.
\newblock Control barrier function-based design of gradient flows for
  constrained nonlinear programming.
\newblock {\em IEEE Transactions on Automatic Control}, 2023.
\newblock To Appear.
\newblock \href {http://dx.doi.org/10.1109/TAC.2023.3306492}
  {\path{doi:10.1109/TAC.2023.3306492}}.

\bibitem{KJA-LH-HU:58}
K.~J. Arrow, L.~Hurwicz, and H.~Uzawa, editors.
\newblock {\em Studies in Linear and Nonlinear Programming}.
\newblock Stanford University Press, 1958.

\bibitem{HHB-PLC:17}
H.~H. Bauschke and P.~L. Combettes.
\newblock {\em Convex Analysis and Monotone Operator Theory in Hilbert Spaces}.
\newblock Springer, 2 edition, 2017, ISBN 978-3-319-48310-8.

\bibitem{AB:17}
A.~Beck.
\newblock {\em First-Order Methods in Optimization}.
\newblock SIAM, 2017, ISBN 978-1-61197-498-0.

\bibitem{AB-MT:08}
A.~Beck and M.~Teboulle.
\newblock A fast iterative shrinkage-thresholding algorithm for linear inverse
  problems.
\newblock {\em SIAM Journal on Imaging Sciences}, 2(1):183--202, 2009.
\newblock \href {http://dx.doi.org/10.1137/080716542}
  {\path{doi:10.1137/080716542}}.

\bibitem{FB:99}
F.~Blanchini.
\newblock Set invariance in control.
\newblock {\em Automatica}, 35(11):1747--1767, 1999.
\newblock \href {http://dx.doi.org/10.1016/S0005-1098(99)00113-2}
  {\path{doi:10.1016/S0005-1098(99)00113-2}}.

\bibitem{JB:03}
J{\'e}r{\^o}me Bolte.
\newblock Continuous gradient projection method in hilbert spaces.
\newblock {\em Journal of Optimization Theory and Applications}, 119:235--259,
  2003.

\bibitem{RWB:91}
R.~W. Brockett.
\newblock Dynamical systems that sort lists, diagonalize matrices, and solve
  linear programming problems.
\newblock {\em Linear Algebra and its Applications}, 146:79--91, 1991.
\newblock \href {http://dx.doi.org/10.1016/0024-3795(91)90021-N}
  {\path{doi:10.1016/0024-3795(91)90021-N}}.

\bibitem{VC-AG-AD-GR-FB:23a}
V.~Centorrino, A.~Gokhale, A.~Davydov, G.~Russo, and F.~Bullo.
\newblock Positive competitive networks for sparse reconstruction.
\newblock {\em Neural Computation}, 36(6):1163–1197, 2024.
\newblock \href {http://dx.doi.org/10.1162/neco_a_01657}
  {\path{doi:10.1162/neco_a_01657}}.

\bibitem{AC-BG-JC:17}
A.~Cherukuri, B.~Gharesifard, and J.~Cortes.
\newblock Saddle-point dynamics: {Conditions} for asymptotic stability of
  saddle points.
\newblock {\em SIAM Journal on Control and Optimization}, 55(1):486--511, 2017.
\newblock \href {http://dx.doi.org/10.1137/15M1026924}
  {\path{doi:10.1137/15M1026924}}.

\bibitem{AD-VC-AG-GR-FB:23f}
A.~Davydov, V.~Centorrino, A.~Gokhale, G.~Russo, and F.~Bullo.
\newblock Time-varying convex optimization: A contraction and equilibrium
  tracking approach.
\newblock {\em IEEE Transactions on Automatic Control}, June 2023.
\newblock Submitted.
\newblock \href {http://dx.doi.org/10.48550/arXiv.2305.15595}
  {\path{doi:10.48550/arXiv.2305.15595}}.

\bibitem{FD-HZ-GB-SB-JL-MM:24}
F.~Dörfler, Z.~He, G.~Belgioioso, S.~Bolognani, J.~Lygeros, and M.~Muehlebach.
\newblock Toward a systems theory of algorithms.
\newblock {\em IEEE Control Systems Letters}, 8:1198--1210, 2024.
\newblock \href {http://dx.doi.org/10.1109/LCSYS.2024.3406943}
  {\path{doi:10.1109/LCSYS.2024.3406943}}.

\bibitem{GG-SK:92}
G.~Giorgi and S.~Koml{\'o}si.
\newblock Dini derivatives in optimization --- {Part I}.
\newblock {\em Rivista di Matematica Per Le Scienze Economiche e Sociali},
  15(1):3--30, 1992.
\newblock \href {http://dx.doi.org/10.1007/BF02086523}
  {\path{doi:10.1007/BF02086523}}.

\bibitem{SG-MC-BB-SFM-STG-FP-DSB:18}
S.~Gu, M.~Cieslak, B.~Baird, S.~F. Muldoon, S.~T. Grafton, F.~Pasqualetti, and
  D.~S. Bassett.
\newblock The energy landscape of neurophysiological activity implicit in brain
  network structure.
\newblock {\em Scientific Reports}, 8(1), 2018.
\newblock \href {http://dx.doi.org/10.1038/s41598-018-20123-8}
  {\path{doi:10.1038/s41598-018-20123-8}}.

\bibitem{SHM-MRJ:21}
S.~Hassan-Moghaddam and M.~R. Jovanovi{\'c}.
\newblock Proximal gradient flow and {D}ouglas-{R}achford splitting dynamics:
  {G}lobal exponential stability via integral quadratic constraints.
\newblock {\em Automatica}, 123:109311, 2021.
\newblock \href {http://dx.doi.org/10.1016/j.automatica.2020.109311}
  {\path{doi:10.1016/j.automatica.2020.109311}}.

\bibitem{JJH-DWT:85}
J.~J. Hopfield and D.~W. Tank.
\newblock {"Neural"} computation of decisions in optimization problems.
\newblock {\em Biological Cybernetics}, 52(3):141--152, 1985.
\newblock \href {http://dx.doi.org/10.1007/bf00339943}
  {\path{doi:10.1007/bf00339943}}.

\bibitem{LJ-YZ:14}
Long Jin and Yunong Zhang.
\newblock Discrete-time zhang neural network for online time-varying nonlinear
  optimization with application to manipulator motion generation.
\newblock {\em IEEE transactions on neural networks and learning systems},
  26(7):1525--1531, 2014.

\bibitem{HK-JN-MS:16}
H.~Karimi, J.~Nutini, and M.~Schmidt.
\newblock Linear convergence of gradient and proximal-gradient methods under
  the {Polyak}-Łojasiewicz condition.
\newblock In {\em Machine Learning and Knowledge Discovery in Databases}, pages
  795--811. Springer International Publishing, 2016.
\newblock \href {http://dx.doi.org/10.1007/978-3-319-46128-1_50}
  {\path{doi:10.1007/978-3-319-46128-1_50}}.

\bibitem{MPK-LOC:88}
M.~P. Kennedy and L.~O. Chua.
\newblock Neural networks for nonlinear programming.
\newblock {\em IEEE Transactions on Circuits and Systems}, 35(5):554--562,
  1988.
\newblock \href {http://dx.doi.org/10.1109/31.1783}
  {\path{doi:10.1109/31.1783}}.

\bibitem{NP-SB:14}
N.~Parikh and S.~Boyd.
\newblock Proximal algorithms.
\newblock {\em Foundations and Trends in Optimization}, 1(3):127--239, 2014.
\newblock \href {http://dx.doi.org/10.1561/2400000003}
  {\path{doi:10.1561/2400000003}}.

\bibitem{BTP:63}
Boris~Teodorovich Polyak.
\newblock Gradient methods for minimizing functionals.
\newblock {\em Zhurnal Vychislitel'noi Matematiki i Matematicheskoi Fiziki},
  3(4):643--653, 1963.

\bibitem{GQ-NL:19}
G.~Qu and N.~Li.
\newblock On the exponential stability of primal-dual gradient dynamics.
\newblock {\em IEEE Control Systems Letters}, 3(1):43--48, 2019.
\newblock \href {http://dx.doi.org/10.1109/LCSYS.2018.2851375}
  {\path{doi:10.1109/LCSYS.2018.2851375}}.

\bibitem{SCS-JPP-WG-KB:20}
S.~C. Surace, J.-P. Pfister, W.~Gerstner, and J.~Brea.
\newblock On the choice of metric in gradient-based theories of brain function.
\newblock {\em PLOS Computational Biology}, 16(4):e1007640, 2020.
\newblock \href {http://dx.doi.org/10.1371/journal.pcbi.1007640}
  {\path{doi:10.1371/journal.pcbi.1007640}}.

\bibitem{ZX-HM-ST-EM-RV:23}
Z.~Xu, H.~Min, S.~Tarmoun, E.~Mallada, and R.~Vidal.
\newblock Linear convergence of gradient descent for finite width
  over-parametrized linear networks with general initialization.
\newblock In {\em International Conference on Artificial Intelligence and
  Statistics}, pages 2262--2284. PMLR, 2023.

\end{thebibliography}

\end{document}